\documentclass[11pt,a4paper,final]{amsart}

\setlength{\oddsidemargin}{0cm}
\setlength{\evensidemargin}{0cm}
\setlength{\textheight}{21.7cm}
\setlength{\textwidth}{15cm}

\usepackage{amsmath}
\usepackage{paralist}
\usepackage{graphics}
\usepackage{graphicx}
\usepackage{epstopdf}
\usepackage[colorlinks=true]{hyperref}
\usepackage{amssymb}
\usepackage{amsthm}
\usepackage{color}
\usepackage{multirow}
\usepackage{booktabs}
\usepackage{bm}
\usepackage{mathrsfs}

\usepackage{tabularx}

\newtheorem{theorem}{theorem}[section]

\newtheorem{lemma}[theorem]{lemma}

\begin{document}

\title[Efficiency and Convergence Insights in Large-Scale Optimization]{Efficiency and Convergence Insights in Large-Scale Optimization Using the Improved Inexact-Newton-Smart Algorithm and Interior-Point Framework}

\author{Neda Bagheri Renani${}^{1}$}
\author{Maryam Jaefarzadeh ${}^{2}$}
\author{Daniel \v{S}ev\v{c}ovi\v{c}${}^{1}$}

\address{${}^{1}$ Department of Applied Mathematics and Statistics, Faculty of Mathematics Physics and Informatics, Comenius University, Mlynsk\'a dolina, 842 48, Bratislava, Slovakia. Corresponding author: {\tt neda.bagheri@fmph.uniba.sk, sevcovic@fmph.uniba.sk} }
\address{${}^{2}$ Department of Mathematical and Computer Science, Sheikhbahaee University, Isfahan, Iran; {\tt m.jafarzade@shbu.ac.ir} }

\begin{abstract}
We present a head-to-head evaluation of the Improved Inexact--Newton--Smart (INS) algorithm against a primal--dual interior-point framework for large-scale nonlinear optimization. On extensive synthetic benchmarks, the interior-point method converges with roughly one third fewer iterations and about one half the computation time relative to INS, while attaining marginally higher accuracy and meeting all primary stopping conditions. By contrast, INS succeeds in fewer cases under default settings but benefits markedly from moderate regularization and step-length control; in tuned regimes its iteration count and runtime decrease substantially, narrowing yet not closing the gap. A sensitivity study indicates that interior-point performance remains stable across parameter changes, whereas INS is more affected by step length and regularization choice. Collectively, the evidence positions the interior-point method as a reliable baseline and INS as a configurable alternative when problem structure favors adaptive regularization.

\medskip
\noindent
2020 MSC. Primary 90C51; Secondary 90C30, 65K05, 90C55, 90C22.

\noindent Key words and phrases. Nonlinear optimization; interior-point; Newton-type algorithms; large-scale optimization; convergence; performance; Hessian regularization.

\end{abstract}

\maketitle

\section{Introduction}

Large-scale nonlinear optimization problems (LSNOPS) play an essential role in various fields such as computational science, engineering design, data analysis, and economic modeling, where high-dimensional systems often require efficient and accurate solution techniques~\cite{nocedal2006numerical, Boyd2004, Wright1997,gondzio2012matrix}.~Despite their broad applicability, solving LSNOPS remains a considerable challenge due to nonlinear constraints, parameter sensitivity, and computational complexity~\cite{Karmarkar1984,nesterov1994,gondzio2013convergence}. Conventional approaches frequently face scalability limitations and convergence instability, particularly when dealing with large and ill-conditioned problem structures~\cite{gondzio2019,Armand2012,Liu2018}. These challenges highlight the ongoing need for improved algorithms capable of balancing efficiency, robustness, and convergence accuracy in large-scale models. Motivated by this demand, the present study investigates the performance and convergence characteristics of two advanced algorithms—the Improved Inexact-Newton-Smart (INS) algorithm and the primal-dual Interior-Point (IPM) framework-offering analytical and numerical insights into their efficiency and reliability for large-scale nonlinear optimization.

Among the most powerful frameworks for solving nonlinear optimization problems are Newton-type iterative schemes and Interior-Point Methods (IPMs). Newton-type approaches exploit second-order information through Hessian updates to achieve quadratic convergence near optimality \cite{nocedal2006numerical, Wright1997}. However, they can exhibit instability or divergence when the Hessian is indefinite or poorly conditioned. IPMs, introduced by Karmarkar (1984) and further developed by Nesterov and Nemirovskii (1994) \cite{Karmarkar1984, nesterov1994}, transform constrained problems into a sequence of barrier subproblems that remain within the feasible region. This barrier-based formulation enables robust convergence for large-scale, structured models and underpins many contemporary solvers in optimization software.

Despite extensive developments, performance comparisons between advanced Newton-type variants and IPMs on large-scale nonlinear models remain limited. Existing studies have typically focused on convex or small-scale problems, providing insufficient insight into computational trade-offs, convergence stability, and parameter sensitivity at scale. This gap motivates the present study, which introduces and analyzes the Improved Inexact-Newton-Smart (INS) algorithm—a refinement of the standard Inexact Newton method incorporating adaptive regularization and step-size control—and compares it with a modern primal-dual IPM framework. The study emphasizes how the two algorithms differ in efficiency, robustness, and sensitivity to problem conditioning.

Existing studies mainly focus on algorithmic improvements without establishing how these approaches behave across varying problem scales and conditioning levels. This lack of comparative insight creates uncertainty in selecting the most efficient solver for real-world large-scale systems. Therefore, there is a strong need for systematic evaluation of Newton-type and interior-point frameworks under consistent computational settings to guide both theoretical development and applied model implementation.
The scope and impact of this research lie in the establishment of a quantitative benchmark that links algorithmic performance with problem scale, conditioning, and parameter selection.~By combining numerical experiments and sensitivity analyzes, the work contributes to both theoretical understanding and practical implementation of large-scale optimization algorithms. The results address key concerns about convergence reliability, computation time, and parameter robustness, offering guidance for algorithm selection in engineering, computational finance, and operations research applications.

In summary, this paper consists of nine sections. Section~\ref{(IPM) Background} presents the theoretical formulation of the LSNOPS and defines the performance metrics. Section~\ref{short-Step Method in IPM} reviews the fundamental principles of the INS algorithm. Section~\ref{Long-Step Method in IPM} describes the primal--dual IPM framework. Section~\ref{sec:INS} details the comparative performance metrics. Section~\ref{sec:ins-eq} introduces the computational setup and test design. Section~\ref{sec:ins-improve} analyzes numerical experiments, including convergence results, sensitivity evaluation, and integration of step strategies. Section~\ref{sec:conclusions} provides the conclusion and Section~\ref{sec:practical} discusses the practical implications and possible extensions of this work.

\section{Interior-Point Method (IPM) Background}
\label{(IPM) Background}
 
Interior-point methods (IPMs) are among the most efficient techniques for solving large-scale linear and convex quadratic programs \cite{Wright1997,gondzio2013convergence}. Let $A \in \mathbb{R}^{m \times n}$ denote the constraint matrix with full row rank, 
$Q \in \mathbb{R}^{n \times n}$ a symmetric positive semidefinite matrix ($Q \succeq 0$), 
$b \in \mathbb{R}^m$ and $c \in \mathbb{R}^n$ the given vectors, 
and $x \in \mathbb{R}^n$, $y \in \mathbb{R}^m$, and $s \in \mathbb{R}^n$ 
the primal, dual, and slack variables, respectively. 
The corresponding primal–dual pair is formulated as:
  
\begin{equation*}
 (P) \quad \begin{array}{rl}
 \hbox{min}_x &  {c^\intercal }x + \frac{1}{2}{x^\intercal }Qx, \\
\hbox{s.t.} &  Ax = b,\  x\ge 0,
  \end{array}\qquad
 (D) \quad \begin{array}{rl}
 \hbox{max}_{y,s} &  {b^\intercal }y - \frac{1}{2}{x^\intercal }Qx, \\
 \hbox{s.t.} &  {A^\intercal }y + s - Qx = c, \ s\ge 0.
  \end{array} 
 \label{1}
 \end{equation*}

To handle $x \geq 0$, IPMs introduce a logarithmic barrier with parameter $\mu > 0$:
\[
\phi(x) = c^\intercal  x + \frac{1}{2} x^\intercal  Q x - \mu \sum_{j=1}^n \ln(x_j).
\]
The perturbed Karush--Kuhn--Tucker (KKT) system becomes
\[
Ax = b, \quad A^\intercal  y + s - Qx = c, \quad X S e = \mu e, \quad x,s > 0,
\]
with $X = \mathrm{diag}(x)$, $S = \mathrm{diag}(s)$, and $e=(1, \dots, 1)^\intercal \in  \mathbb{R}^n$ is the vector of ones.

The set $\{(x(\mu),y(\mu),s(\mu)):\mu>0\}$ defines \emph{central path}. The convergence to optimality is obtained as $\mu \to 0$, with the duality gap:
\[
c^\intercal  x + \frac{1}{2} x^\intercal  Q x - \Big(b^\intercal  y - \frac{1}{2} x^\intercal  Q x \Big) = x^\intercal  s = n\mu.
\]
Each iteration applies Newton’s method to the KKT system. The exact Newton step $\Delta z= (\Delta x, \Delta y, \Delta s)\in  \mathbb{R}^n\times  \mathbb{R}^m \times  \mathbb{R}^n$ solves the linear system:
\[
K \Delta z 
\equiv
\begin{bmatrix}
-Q & A^\intercal  & I \\
A & 0 & 0 \\
S & 0 & X
\end{bmatrix}
\begin{bmatrix}
\Delta x \\ \Delta y \\ \Delta s
\end{bmatrix}
=
\begin{bmatrix}
r_d \\ r_p \\ r_c
\end{bmatrix},
\]
where $(r_p,r_d,r_c)\in \mathbb{R}^n\times  \mathbb{R}^m \times  \mathbb{R}^n$ are primal, dual and complementarity residuals. This linear system dominates the cost of each iteration.

\textbf{Complexity.}  
Short-step algorithms, which confine iterations to narrow neighborhoods of the central path, achieve
$\mathcal{O}(\sqrt{n} \log(1/\varepsilon))$ iterations to achieve given a precision goal $\varepsilon>0$ (cf. \cite{Wright1997}). Long-step variants, with wider neighborhoods, require 
\begin{equation}\label{93}
    \mathcal{O}~\!\left(n\log(1/\varepsilon)\right),
\end{equation}
 iterations, making them computationally efficient \cite{gondzio2013convergence}.

\textbf{Inexact Newton Directions.}  
Instead of solving the Newton system exactly, one may compute an approximate solution: $K \Delta z = r + \epsilon$, where $K$ is the KKT matrix, $ r= (r_p, r_d, r_c)$ is the residual, and $\epsilon\in\mathbb{R}^n\times  \mathbb{R}^m \times  \mathbb{R}^n$ is the vector of inexactness. It measures the discrepancy in the third KKT equation $X S e = \mu e$. If the error vector $\epsilon$ satisfies $\|\epsilon\| \leq \delta \|r\|$, for some $\delta \in (0,1)$, then the global convergence and complexity bounds are preserved (cf. \cite{gondzio2019}).

\textbf{Matrix-Free Approaches.}  
Krylov subspace solvers, combined with preconditioning, allow IPMs to operate in a matrix-free regime, requiring only matrix vector products with $A$, $Q$, and their transposes. This approach enables the solution of problems with millions of variables while reducing memory and factorization costs \cite{Armand2012}.\\
 
\textbf{Complexity justification for the long-step IPM.}
In long-step IPMs, iterates are allowed to deviate further from the central path by working in the $L_\infty$ neighborhood 
$N_\infty(\gamma)=\{(x,y,s)>0:\|XS e-\mu e\|_\infty\le\gamma\mu\}$.
To maintain feasibility, the step length $\alpha$ is chosen using a fraction-to-the-boundary rule (Eq.~(\ref{eq:recursion})), 
and the inexact Newton analysis yields the duality-gap recursion (Eq.~(\ref{101})):
\[
\mu_{k+1}\le(1-\alpha(1-\sigma-\kappa_1))\mu_k+\alpha^2\kappa_2\,\mu_k^2.
\]
Maintaining $N_\infty(\gamma)$ enforces a component-wise centrality control, 
which limits the average contraction per iteration to $1-\Theta(1/n)$; equivalently, 
$\mu_{k+1}\le (1-\tfrac{c}{n})\mu_k$ for some constant $c\in(0,1)$ independent of $n$.
Therefore, reducing the duality gap from $\mu_0$ to $\varepsilon$ requires 
\[
k=O\!\left(n\log\frac{\mu_0}{\varepsilon}\right)=O\!\bigl(n\log(1/\varepsilon)\bigr).
\]
By contrast, short-step neighborhoods enforce a tighter Euclidean centrality that yields 
a faster per-iteration contraction $1-\Theta(1/\sqrt{n})$, hence $O(\sqrt{n}\log(1/\varepsilon))$ iterations.
(See the neighborhood definition and step rule in Section~\ref{Long-Step Method in IPM}, 
Eq.~(\ref{eq:recursion}), and the gap recursion in Eq.~(\ref{101}).)

This analysis clarifies the role of neighborhood width in determining iteration complexity and 
complements the preceding discussion on matrix-free implementations.
  
In summary, IPMs combine rigorous polynomial complexity with scalable algorithmic implementations. 
The introduction of inexact Newton directions and matrix-free techniques has reinforced their role 
as a core in modern large-scale optimization.

\section{Analysis of the Short-Step Method in IPM}
\label{short-Step Method in IPM}

The short-step variant of interior-point methods (IPMs) confines iterations to a narrow neighborhood of the central path, thereby ensuring polynomial-time complexity and strong numerical stability. Its convergence analysis draws on higher-order Taylor expansions, perturbation bounds for matrix systems, and recursive control of the duality gap.

Let $(x, y, s)$ be a strictly feasible primal--dual iterate with $x, s > 0$. The \emph{duality gap} is defined by
\[
\mu = \frac{x^\intercal s}{n}. 
\]
To compute a search direction, the perturbed KKT system is solved inexactly as follows:
\begin{equation}\label{eq:short-kkt}
  K \Delta z = r + \epsilon. 
\end{equation}
Recall that $K$ is the KKT matrix, $r$ is the residual vector, and $\epsilon$ denotes the inexactness error vector. For a step size $\alpha \in (0,1]$, the updated iterates are
\[
x^{+} = x + \alpha \Delta x, \quad 
y^{+} = y + \alpha \Delta y, \quad 
s^{+} = s + \alpha \Delta s.
\]
These relations highlight how approximate Newton directions influence the update of primal, dual, and slack variables. By expanding the complementarity product, one derives inequalities that govern the decrease of the duality gap under inexact directions. This recursive structure forms the foundation for the complexity analysis of short-step methods.
For the reader's convenience, we state and prove the following lemmas.

\begin  {lemma}\label{Lemma1}
Assume that the sequence $\{\mu_k\}_{k \geq 0}$ of nonnegative numbers $\mu_k \geq 0$ satisfies the inequality
\[
\mu_{k+1} \leq (1-\omega)\mu_k + C \mu_k^2, \quad k=0,1,\dots,
\]
where $\omega \in (0,1)$, $C>0$ are constants. If $\mu_0 < \omega/C$, then
\[
\mu_k \leq (1-\omega + C \mu_0)^k \mu_0, \quad k \geq 0.
\]
As a consequence, the sequence $\{\mu_k\}_{k \geq 0}$ converges to zero at an exponential rate. Furthermore, the number of iterations $k$ needed to achieve a given precision goal $0 \leq \mu_k < \varepsilon$ is
\[
k = \mathcal{O}\!\left(\frac{\log(1/\varepsilon)}{\omega - C\mu_0}\right).
\]
\end  {lemma}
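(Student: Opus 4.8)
The plan is to establish the geometric bound $\mu_k \le q^k \mu_0$ with $q := 1-\omega + C\mu_0$ by induction on $k$, and then to read off both the exponential convergence and the iteration count from it. First I would check that $q \in (0,1)$: since $\omega < 1$ and $C\mu_0 > 0$ we have $q = (1-\omega) + C\mu_0 > 0$, while the hypothesis $\mu_0 < \omega/C$ gives $C\mu_0 < \omega$, hence $q < 1$. This is precisely the role played by the smallness assumption on $\mu_0$: it guarantees a genuine contraction factor strictly below one.

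For the induction, the base case $k=0$ is the trivial equality $\mu_0 \le \mu_0$. For the inductive step, assuming $\mu_k \le q^k \mu_0$, I would factor the recursion as $\mu_{k+1} \le \mu_k\bigl((1-\omega) + C\mu_k\bigr)$. The key observation is that $q \in (0,1)$ forces $q^k \le 1$, so the inductive hypothesis yields not only $\mu_k \le q^k\mu_0$ but also the cruder bound $\mu_k \le \mu_0$. Substituting the latter into the quadratic term gives $(1-\omega) + C\mu_k \le (1-\omega) + C\mu_0 = q$, and therefore $\mu_{k+1} \le q\,\mu_k \le q\,q^k\mu_0 = q^{k+1}\mu_0$, closing the induction. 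The only place where care is genuinely needed is recognizing that the induction is self-sustaining precisely because $\mu_k$ never exceeds $\mu_0$, which keeps the effective per-step factor pinned at or below $q$.

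Exponential convergence is then immediate: $0 \le \mu_k \le q^k\mu_0 \to 0$ as $k \to \infty$, since $0 < q < 1$. For the iteration count I would solve the sufficient condition $q^k\mu_0 < \varepsilon$; taking logarithms and dividing by $\log q < 0$ gives $k > \log(\mu_0/\varepsilon)/(-\log q)$. Writing $\eta := \omega - C\mu_0 > 0$ so that $q = 1-\eta$, I would invoke the elementary inequality $-\log(1-\eta) \ge \eta$, valid for $\eta \in (0,1)$, which bounds the denominator from below and hence the quotient from above by $\log(\mu_0/\varepsilon)/\eta$. Absorbing the fixed constant $\log\mu_0$ into the asymptotics as $\varepsilon \to 0$ yields $\log(\mu_0/\varepsilon) = \mathcal{O}(\log(1/\varepsilon))$, so the number of required iterations is $k = \mathcal{O}\bigl(\log(1/\varepsilon)/(\omega - C\mu_0)\bigr)$, as claimed. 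No step presents a serious obstacle; the main things to get right are the interplay between the two forms of the inductive hypothesis in the second paragraph and the logarithmic inequality that converts geometric decay into the stated complexity bound.
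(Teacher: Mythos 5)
Your proof is correct and follows essentially the same route as the paper: an induction establishing the geometric bound $\mu_k \le (1-\omega+C\mu_0)^k\mu_0$, followed by taking logarithms (with $-\log(1-\eta)\ge \eta$ for $\eta=\omega-C\mu_0$) to obtain the stated iteration count. The only cosmetic difference is in the inductive step, where you factor the recursion as $\mu_{k+1}\le \mu_k\bigl((1-\omega)+C\mu_k\bigr)$ and use the crude bound $\mu_k\le\mu_0$, whereas the paper substitutes the inductive hypothesis into both terms and uses $q^{2k}\le q^k$; the two manipulations are equivalent.
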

\begin{proof}
 We proceed by mathematical induction. Suppose that
$\mu_k \leq \left( 1 - \omega + C \mu_0 \right)^k \mu_0$ for some $k\ge 0$. Then
 \begin{eqnarray*}
 \mu_{k+1} &\leq& \left( 1 - \omega  \right) \left( 1 - \omega + C \mu_0 \right)^k \mu_0  + C \left( 1 - \omega + C \mu_0 \right)^{2k} \mu^2_0
 \\
  &=&  \left( 1 - \omega + C \mu_0 \right)^{k+1} \mu_0  
 + C  \mu^2_0 \left( - \left( 1 - \omega + C \mu_0 \right)^{k} + \left( 1 - \omega + C \mu_0 \right)^{2k} \right)
  \\
 &\le &  \left( 1 - \omega + C \mu_0 \right)^{k+1} \mu_0  ,
 \end{eqnarray*}
 because $- \left( 1 - \omega + C \mu_0 \right)^{k} + \left( 1 - \omega + C \mu_0 \right)^{2k}<0$ as $0<1 - \omega + C \mu_0 <1$.
 
 As a consequence, we derive the estimate on the number $k$ is iterates which are necessary to achieve a given precision goal $\varepsilon>0$. Clearly, $0\le \mu_k\le \varepsilon$ provided that $k\ge \log(\varepsilon/\mu_0)/\log(1 - \omega + C \mu_0 )$, that is, $k = \mathcal{O}(\log(1/\varepsilon)/(\omega-C \mu_0))$. 
 \end{proof}

\begin  {lemma}\label{Lemma2}
Suppose that the inexact Newton step $(\Delta x,\Delta y,\Delta s)$ in \eqref{eq:short-kkt} satisfies
\begin{equation}\label{eq:inexact-bounds}
\frac{|e^\intercal  r|}{n} \le \kappa_1\mu,
\qquad
\frac{|\Delta x^\intercal\Delta s|}{n}\ \le\kappa_2\,\mu^2,
\end{equation}
for some constants $\kappa_1,\kappa_2\ge 0$. Suppose that the inexactness error vector $\epsilon$ is given by $\epsilon=\sigma\mu e - X S e$ where $\sigma\in(0,1)$ and $\sigma+ \kappa_1<1$. Then, for any step size $\alpha\in(0,1]$, the updated duality gap $\mu^+=\tfrac{(x+\alpha\Delta x)^\intercal (s+\alpha\Delta s)}{n}$ satisfies the inequality:
\begin{equation}\label{eq:muplus-new}
\mu^+ \le \bigl(1-\alpha(1-\sigma-\kappa_1)\bigr)\mu  +  \alpha^2 \kappa_2\mu^2.
\end{equation}

\end  {lemma}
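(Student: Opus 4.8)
The plan is to derive \eqref{eq:muplus-new} by expanding the bilinear form that defines $\mu^+$ and substituting the scalar identity produced by the complementarity (third) block of the inexact Newton system \eqref{eq:short-kkt}. First I would write
\[
n\mu^+ = (x+\alpha\Delta x)^\intercal(s+\alpha\Delta s) = x^\intercal s + \alpha\,(s^\intercal\Delta x + x^\intercal\Delta s) + \alpha^2\,\Delta x^\intercal\Delta s,
\]
so that, dividing by $n$ and using $\mu = x^\intercal s/n$, the updated gap decomposes into the base term $\mu$, a term linear in $\alpha$ controlled by $s^\intercal\Delta x + x^\intercal\Delta s$, and a term quadratic in $\alpha$ controlled by $\Delta x^\intercal\Delta s$. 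The second bound in \eqref{eq:inexact-bounds} disposes of the last piece at once, giving $\alpha^2\Delta x^\intercal\Delta s/n \le \alpha^2\kappa_2\mu^2$, so the work lies entirely in the cross term.

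The key step is to evaluate $s^\intercal\Delta x + x^\intercal\Delta s$ from the third block row of \eqref{eq:short-kkt}, namely $S\Delta x + X\Delta s = \epsilon + r = (\sigma\mu e - XSe) + r$, where $r$ is the complementarity residual appearing in \eqref{eq:inexact-bounds}. Left-multiplying by $e^\intercal$ and using $Se = s$, $Xe = x$ (so that $e^\intercal S\Delta x = s^\intercal\Delta x$ and $e^\intercal X\Delta s = x^\intercal\Delta s$), together with $e^\intercal(XSe) = x^\intercal s = n\mu$ and $e^\intercal e = n$, yields the scalar identity
\[
s^\intercal\Delta x + x^\intercal\Delta s = -\,n\mu(1-\sigma) + e^\intercal r.
\]
This is precisely where the centering parameter $\sigma$ enters with the sign that generates the contraction, while the residual $e^\intercal r$ is isolated for estimation.

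Finally I would substitute this identity back and estimate the two remaining error terms: using the first bound in \eqref{eq:inexact-bounds} in the form $\alpha\,e^\intercal r/n \le \alpha\,|e^\intercal r|/n \le \alpha\kappa_1\mu$, and the quadratic bound as above, I obtain
\[
\mu^+ = \mu - \alpha\mu(1-\sigma) + \frac{\alpha\,e^\intercal r}{n} + \frac{\alpha^2\,\Delta x^\intercal\Delta s}{n} \le \bigl(1 - \alpha(1-\sigma-\kappa_1)\bigr)\mu + \alpha^2\kappa_2\mu^2,
\]
after collecting the coefficients of $\alpha\mu$. I expect the only delicate points to be the sign bookkeeping in passing to the scalar identity and the convention that $e^\intercal r$ refers to the complementarity block of the residual; the standing hypothesis $\sigma+\kappa_1<1$ is exactly what keeps the factor $1-\alpha(1-\sigma-\kappa_1)$ strictly below one, so that \eqref{eq:muplus-new} is a bona fide contraction ready to be chained with Lemma~\ref{Lemma1}.
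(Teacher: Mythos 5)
Your proposal is correct and follows essentially the same route as the paper's proof: the same expansion of $n\mu^+$ into base, linear, and quadratic terms in $\alpha$, the same scalar identity $s^\intercal\Delta x + x^\intercal\Delta s = -n(1-\sigma)\mu + e^\intercal r$ extracted from the complementarity block with $\epsilon=\sigma\mu e - XSe$, and the same application of the bounds \eqref{eq:inexact-bounds}. Your explicit derivation of the identity by left-multiplying the third block row by $e^\intercal$ (using $Se=s$, $Xe=x$) is a welcome detail that the paper states without justification, but it is not a different argument.
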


\begin{proof}
From the update $x^+=x+\alpha\Delta x$, $s^+=s+\alpha\Delta s$, we have
\[
\mu^+ \;=\; \frac{(x+\alpha\Delta x)^\intercal (s+\alpha\Delta s)}{n}
\;=\; \mu \;+\; \frac{\alpha}{n}\bigl(x^\intercal \Delta s + s^\intercal \Delta x\bigr) \;+\; \frac{\alpha^2}{n}\,\Delta x^\intercal \Delta s.
\]
The perturbed KKT system \eqref{eq:short-kkt} with the inexactness vector $\epsilon=\sigma\mu e - X S e$  implies
\[
x^\intercal \Delta s + s^\intercal \Delta x \;=\; -(1-\sigma)\,x^\intercal  s \;+\; e^\intercal  r
\;=\; -n(1-\sigma)\,\mu \;+\; e^\intercal  r,
\]
hence
\begin{equation}\label{100}
    \mu^+ \;=\; \bigl(1-\alpha(1-\sigma)\bigr)\mu \;+\; \frac{\alpha}{n}\,e^\intercal  r \;+\; \frac{\alpha^2}{n}\,\Delta x^\intercal \Delta s
\end{equation}

Applying the bounds \eqref{eq:inexact-bounds} we obtain
\begin{equation*}\label{eq:mu-recursion}
    \mu^+ \le (1-\omega)\mu  +  C \mu^2.
\end{equation*}
where $\omega=\alpha(1-\sigma-\kappa_1)$ and $C=\alpha^2\,\kappa_2$. This proves the inequality \eqref{eq:muplus-new}.
\end{proof}

\subsection{ Local Model of Complementarity}

Suppose that the complementarity condition $X_k S_k  e = \mu_k e$ in the KKT conditions is perturbed by the error term. The Newton direction $(\Delta x_k, \Delta y_k, \Delta s_k)$ is obtained by solving the following system of equations:
\setlength{\abovedisplayskip}{6pt}
\begin{equation*}
\label{120}
A \Delta x_k = 0, \quad - Q \Delta x_k  + A^\intercal   \Delta y_k  + \Delta s_k = 0, \quad 
S_k \Delta x_k  + X_k  \Delta s_k = \epsilon_k+r_k, 
\end{equation*}
where the inexactness vector is given by $\epsilon_k=\sigma\mu_k e - X_k S_k e$ and $r_k$ is the residual of the inexact solver. By applying a second-order Taylor expansion to the perturbed central path, we obtain the following.
\[
    \mu_{k+1} = \frac{(x_k + \alpha \Delta x_k)^\intercal (s_k + \alpha \Delta s_k)}{n}
= (1 - \alpha(1 - \sigma)) \mu_k + \frac{\alpha}{n} e^\intercal r_k + \frac{\alpha^2}{n} \Delta x_k^\intercal \Delta s_k.
\]
To ensure a monotonic decrease in the duality gap, we assume (\ref{eq:inexact-bounds}).

\subsection { Theoretical Implications}
The short-step method can be seen as a \textit{damped Newton method along the central path}, with good stability because it stays close to the analytic center.
Due to its emphasis on \textit{stability, robustness, and assured polynomial-time convergence}, it avoids reducing the duality gap, making it particularly beneficial in \textit{degenerate problem settings} or \textit{ill-conditioned} environments.


   

The next theorem states an exponential decrease in the duality gap.

\begin  {theorem}\label{Theorem1}
Let $(x_k,y_k,s_k)$ be a primal--dual iterate for a short-step primal--dual IPM with perturbed complementarity condition
$X_k S_k e = \mu_k e + r_k$. Let $(\Delta x_k, \Delta y_k, \Delta s_k)$ be the inexact Newton direction obtained from \eqref{eq:short-kkt} with the centering parameter $\sigma \in (0,1)$ and the inexactness error vector $\epsilon$ is given by $\epsilon=\sigma\mu e - X S e$ where $\sigma\in(0,1)$. Assume $(x_k,y_k,s_k)$ belongs to the short-step neighborhood $N_2(\gamma) = \{(x, y, s) : \Vert XSe - \mu e\Vert_2 \leq \gamma \mu \}$ where $\mu=\mu_k = x_k^\intercal  s_k / n$ with $\gamma \in (0,1)$, and that for fixed tolerances $\kappa_1,\kappa_2>0$  the estimates (\ref{eq:inexact-bounds}) are satisfied. Suppose $\sigma+\kappa_1 <1$ and
\begin{equation*}
\frac{1}{1-\sigma-\kappa_1} \geq \alpha \geq \frac{\eta}{n(1-\sigma-\kappa_1)}, \qquad \eta \in (0,1),
\label{alpha}
\end{equation*}
together with the feasibility conditions $x_k+\alpha\Delta x_k > 0$ and $s_k+\alpha\Delta s_k > 0$. The sequence $\{\mu_k\}$ converges exponentially to zero, provided that the initial gap $0<\mu_0\ll 1$ is sufficiently small. 
\end  {theorem}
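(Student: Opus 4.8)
The plan is to reduce Theorem~\ref{Theorem1} to the two lemmas already established, so that the real content becomes verifying their hypotheses and then iterating the resulting recursion. First I would note that the setup of the theorem matches Lemma~\ref{Lemma2} verbatim: the inexactness vector has the prescribed form $\epsilon = \sigma\mu e - XSe$, the centering parameter satisfies $\sigma + \kappa_1 < 1$, and the bounds \eqref{eq:inexact-bounds} hold at the current iterate. Applying Lemma~\ref{Lemma2} with the given $\alpha$ then yields the one-step quadratic recursion
\[
\mu_{k+1} \le (1-\omega)\mu_k + C\mu_k^2, \qquad \omega = \alpha(1-\sigma-\kappa_1), \quad C = \alpha^2\kappa_2,
\]
which is exactly the inequality driving Lemma~\ref{Lemma1}. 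The remaining work is to check that $\omega$ and $C$ meet the hypotheses of Lemma~\ref{Lemma1} and that the recursion holds uniformly in $k$.

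Next I would verify $\omega \in (0,1)$. Positivity follows from $\alpha > 0$ and $1-\sigma-\kappa_1 > 0$; quantitatively, the lower bound $\alpha \ge \eta/\!\left(n(1-\sigma-\kappa_1)\right)$ gives $\omega \ge \eta/n > 0$. For the strict upper bound, the constraint $\alpha \le 1/(1-\sigma-\kappa_1)$ yields $\omega \le 1$, and combining with the step-length convention $\alpha \le 1$ of the short-step scheme gives $\omega \le 1-\sigma-\kappa_1 < 1$. The smallness requirement of Lemma~\ref{Lemma1}, namely $\mu_0 < \omega/C = (1-\sigma-\kappa_1)/(\alpha\kappa_2)$, is precisely what the phrase ``$0 < \mu_0 \ll 1$ sufficiently small'' encodes, and this is the point at which that assumption is consumed.

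The one genuine subtlety, which I expect to be the main obstacle, is propagation: Lemma~\ref{Lemma2} delivers only a single-step estimate, while Lemma~\ref{Lemma1} needs the recursion to hold for every $k$, which in turn requires the inexactness bounds \eqref{eq:inexact-bounds} to remain valid at each new iterate. To close this gap I would argue by induction that the short-step neighborhood $N_2(\gamma)$ is invariant under the update, using the stated feasibility conditions $x_k+\alpha\Delta x_k > 0$ and $s_k+\alpha\Delta s_k > 0$ together with the closeness to the central path enforced by $N_2(\gamma)$; this keeps each iterate strictly feasible and guarantees that the same tolerances $\kappa_1,\kappa_2$ govern every step. Since $\mu_0 < \omega/C$ and the recursion is contractive, the sequence is decreasing, so $\mu_k < \omega/C$ is preserved for all $k$ and the recursion applies uniformly. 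With neighborhood invariance in hand, Lemma~\ref{Lemma1} gives $\mu_k \le (1-\omega+C\mu_0)^k\mu_0$ with base in $(0,1)$, establishing exponential convergence of $\{\mu_k\}$ to zero and completing the proof.
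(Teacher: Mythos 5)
Your proposal is correct and follows essentially the same route as the paper: the paper's proof likewise derives the Lemma~\ref{Lemma2} recursion $\mu_{k+1} \le \bigl(1-\alpha(1-\sigma-\kappa_1)\bigr)\mu_k + \alpha^2\kappa_2\mu_k^2$, uses the lower bound on $\alpha$ to bound the contraction factor by $1-\tfrac{\eta}{n}$, and concludes geometric decrease for sufficiently small $\mu_0$, implicitly invoking Lemma~\ref{Lemma1}. If anything, your write-up is more complete than the paper's, since you explicitly verify $\omega\in(0,1)$ and $\mu_0<\omega/C$, and you flag the need to propagate the bounds \eqref{eq:inexact-bounds} to every iterate via invariance of $N_2(\gamma)$ --- a point the paper's proof passes over in silence.
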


\begin{proof}
As in Lemma~\ref{Lemma2}, for the update $x_{k+1}=x_k+\alpha \Delta x_k$, $s_{k+1}=s_k+\alpha \Delta s_k$, we obtain
\[
\mu_{k+1} = \frac{(x_k+\alpha \Delta x_k)^\intercal (s_k+\alpha \Delta s_k)}{n}
= \mu_k + \frac{\alpha}{n}\bigl(x_k^\intercal \Delta s_k + s_k^\intercal \Delta x_k\bigr) + \frac{\alpha^2}{n}\Delta x_k^\intercal  \Delta s_k.
\]
The inexact perturbed system implies
\[
x_k^\intercal \Delta s_k + s_k^\intercal \Delta x_k = -(1-\sigma)\,x_k^\intercal  s_k + e^\intercal  r_k 
= -n(1-\sigma)\mu_k + e^\intercal  r_k.
\]
Then $\mu_{k+1} = \bigl(1-\alpha(1-\sigma)\bigr)\mu_k + \frac{\alpha}{n}e^\intercal  r_k + \frac{\alpha^2}{n}\Delta x_k^\intercal  \Delta s_k$. Applying the bounds \eqref{eq:inexact-bounds} yields
\[
\mu_{k+1} \leq \Bigl(1-\alpha(1-\sigma-\kappa_1) \Bigr)\mu_k +\alpha^2 \kappa_2 \mu_k^2
 \leq \Bigl(1-\frac{\eta}{n}\Bigr)\mu_k +\alpha^2 \kappa_2 \mu_k^2
\]
Choosing $\alpha$, the contraction factor is bounded above by $1-\tfrac{\eta}{n}$. Therefore, $\{\mu_k\}$ decreases geometrically, that is, at an exponential rate.
\end{proof}
 
\paragraph{Role of the inexactness level \texorpdfstring{$\delta$}{delta} and its impact on convergence.}
In the inexact Newton relation $K\Delta z = r+\varepsilon$ with $\|\varepsilon\|\le \delta\|r\|$, the parameter $\delta\in(0,1)$ controls how accurately the linear system is solved at each iteration. Under the short–step neighborhood and norm equivalences used in \eqref{eq:short-kkt}–\eqref{101}, the residual coupling term $n^{-1}|e^\intercal r|$ entering Lemma~2 is bounded proportionally to $\|\varepsilon\|$, so there exists a constant $c_\gamma>0$ (depending only on the chosen neighborhood parameter $\gamma$ and norm) such that the estimate $n^{-1}|e^\intercal r|\le \kappa_1\mu$ holds with $\kappa_1\le c_\gamma\,\delta$. Substituting this bound into \eqref{eq:muplus-new} gives the recursion:
\[
\mu_{k+1}\;\le\; \Big(1-\alpha\,[\,1-\sigma-\underbrace{c_\gamma\,\delta}_{\text{from inexactness}}\,]\Big)\,\mu_k \;+\; \alpha^2\kappa_2\,\mu_k^2,
\]
so the linear contraction factor is
\[
\omega(\delta)\;=\;\alpha\,[\,1-\sigma-c_\gamma\,\delta\,].
\]
\emph{Consequences.} (i) If $\delta$ is \emph{bounded away from \eqref{93}} so that $0\le \delta\le \bar\delta< (1-\sigma)/c_\gamma$, then $\omega(\delta)>0$. 
\noindent
Theorem~1 ensures global convergence with the same iteration–complexity order as the exact short–step method. The linear convergence factor $\omega(\delta)$ decreases as $\delta$ increases, indicating that higher inexactness slightly slows the rate. Moreover, if $\delta_k \to 0$ as $k \to \infty$, then $\kappa_1 \to 0$ and the linear term dominates the quadratic remainder. Consequently, we recover the classical inexact–Newton behavior: local Q–linear convergence when $\sup_k \delta_k < 1$, and accelerated (superlinear) local convergence when $\delta_k \to 0$ (cf. \eqref{subsec:ecnp-inexact} for the discussion of the forcing term).

\emph{Practical choice.} Choose $\delta$ so that
\[
0<\delta\le \bar\delta \;:=\; \rho\,\frac{1-\sigma}{c_\gamma}\quad\text{with}\quad \rho\in(0,1)
\]
(e.g., $\rho=\tfrac12$), which guaranties $\omega(\delta)\ge \alpha(1-\sigma)/2$. In implementations, an adaptive rule decreases $\delta_k$ as the duality gap $\mu_k$ shrinks (analogous to the forcing-term strategy in \eqref{eq:forcing} preserves robustness far from the solution while improving the local rate as the iterations approach optimality.

\section{Analysis of the Long-Step Method in IPM}
\label{Long-Step Method in IPM}

The long-step interior-point method (IPM) extends the primal--dual framework by permitting iterates to deviate further from the central path compared to the short-step variant. Although the theoretical complexity bound increases to \eqref{93}, the practical benefit lies in significantly larger step sizes and fewer overall Newton iterations \cite{nesterov1994,gondzio2013convergence}.

Let $\mu = \tfrac{x^\intercal  s}{n}$ denote the duality gap. The long-step method admits iterations within the $L^\infty$-norm neighborhood
$\mathcal{N}_\infty(\gamma) \;=\; \{ (x,y,s) > 0 : \| XSe - \mu e \|_\infty \leq \gamma \mu \}, \qquad \gamma \in (0,1)$.
Compared with the Euclidean short-step neighborhood, $N_\infty(\gamma)$ allows larger component-wise deviations from the central path, allowing for more rapid progress. The step selection below is chosen to keep $(x^+,y^+,s^+)\in N_\infty(\gamma)$ for a fixed $\gamma\in(0,1)$ (see \cite{Wright1997,gondzio2013convergence,Liu2018}).

Consider a primal--dual iterate $(x,y,s)$ and an inexact Newton direction $(\Delta x,\Delta y,\Delta s)$ obtained from \eqref{eq:short-kkt}.
By the identity \eqref{100}, we have the complementarity update for $\mu^+$. Similarly, as in the previous section, the normalized inexactness bounds \eqref{eq:inexact-bounds} again yield the following:
\begin{equation}\label{101}
\mu_{k+1} \le \bigl(1-\alpha(1-\sigma-\kappa_1)\bigr)\,\mu_k +\alpha^2\kappa_2\,\mu_k^2.
\end{equation}
Hence, the sequence $\{\mu_k\}$ converges exponentially to zero, provided that the initial gap $0<\mu_0\ll 1$ is sufficiently small.

Finally, we discuss feasibility, step size, and neighborhood maintenance. 
To preserve positivity and remain within $N_\infty(\gamma)$, choose the step length by the standard fraction-to-the-boundary rule, \emph{restricted to indices that move toward the boundary}:
\begin{equation}
\alpha\;=\;\min\!\Bigg(
1,\;
\tau\cdot
\min\Big\{
\min_{j:\,\Delta x_j<0}\!\Big(-\frac{x_j}{\Delta x_j}\Big),\;
\min_{j:\,\Delta s_j<0}\!\Big(-\frac{s_j}{\Delta s_j}\Big)
\Big\}
\Bigg),\qquad \tau\in(0,1).
\label{eq:recursion}
\end{equation}
This ensures $x+\alpha\Delta x>0$ and $s+\alpha\Delta s>0$. For suitably chosen $\sigma$ and $\tau$ (together with the inexactness bounds above), one can keep $(x^+,y^+,s^+)\in N_\infty(\gamma)$ and thus retain the contraction of $\mu$ described in \eqref{101} (see  \cite{liu2022primal}).

\paragraph{Rationale for $\tau\in(0,1)$ and its effect.}
If the minimizer in \eqref{eq:recursion} is attained at an index $j$ with $\Delta x_j<0$, then for $\tau=1$
we obtain $\alpha=-x_j/\Delta x_j$, leading to $x_j^+=x_j+\alpha\Delta x_j=0$; analogously,
if $\Delta s_j<0$ then $s_j^+=0$. Thus, $\tau=1$ may step exactly to the boundary.
For any $\tau\in(0,1)$, the step length satisfies $\alpha\le\tau(-x_j/\Delta x_j)$ on the
active index, ensuring
\[
x_j^+=x_j+\alpha\Delta x_j \ge (1-\tau)x_j>0,
\]
and similarly $s_j^+>0$. Hence, $\tau\in(0,1)$ guarantees strict interior feasibility
($x^+>0$, $s^+>0$) and preserves the neighborhood conditions assumed in the analysis.
Moreover, $\tau$ affects stability and convergence: smaller $\tau$ values produce shorter,
more conservative steps that enhance stability, whereas values closer to one yield faster
progress but approach the boundary more aggressively.

\section{Inexact-Newton-Smart Test (INS) Method}
\label{sec:INS}

The Inexact-Newton-Smart (INS) method is a second-order optimization framework designed for large-scale nonlinear optimization problems (LSNOPS). It combines Newton-type updates with adaptive regularization, dynamic step selection, and robust stopping criteria. These components collectively improve convergence speed and numerical stability compared to conventional Newton schemes (cf. \cite{pankratov2019optimization, gondzio2013convergence}).

We consider the general nonlinear program
\begin{align*}
\min_{x \in \mathbb{R}^n} f(x), \quad \text{subject to} \quad A x = b, \quad x \ge 0,
\end{align*}
where, $A \in \mathbb{R}^{m \times n}$ and $b \in \mathbb{R}^m$.  
The associated Lagrangian function is given by 
\[
L(x, y, s) = f(x) + y^{\intercal}(A x - b) - s^{\intercal}x,
\]
with multipliers $y$ and dual variables $s$.  
The KKT conditions are given by
\[
\nabla f(x) + A^{\intercal} y - s = 0, \quad A x - b = 0, \quad X S e = 0.
\]

\subsection{Newton System with Regularization} \label{Newton System}

The Newton direction $(\Delta x, \Delta y)$ is determined from the modified KKT system
\begin{equation}
\begin{bmatrix}\label{Matrix}
H & A^\intercal\\ A & 0
\end{bmatrix}
\!\begin{bmatrix}\Delta x\\ \Delta y\end{bmatrix}
=
\begin{bmatrix}
-\big(\nabla f(x)+A^\intercal y-s\big)+X^{-1}(\sigma\mu e - XSe)\\ -(Ax-b)
\end{bmatrix}
\end{equation}
$H=\nabla^2 f(x)+X^{-1}S$,~and $\Delta s = X^{-1}\!\big(\sigma\mu e - XSe - S\Delta x\big)$. To stabilize, use $H^{\mathrm{mod}}=H+\theta I$ ($\theta>0$) and $\mu=x^\intercal s/n$ (cf. \cite{Wang2017, byrd2010inexact}).

\subsection{Step length and stopping}\label{subsec:ins-steps} 
Choose \(\alpha\in(0,1]\) by the \emph{sign–restricted fraction–to–the–boundary rule} of  (\ref{eq:recursion}), which ensures \(x^{+}=x+\alpha\Delta x>0\) and \(s^{+}=s+\alpha\Delta s>0\) while keeping \((x^{+},y^{+}, s^{+})\in\mathcal N_{\infty}(\gamma)\) from Section \eqref{Long-Step Method in IPM}. Terminate when the KKT residual norms and the duality gap \(\mu\) fall below the prescribed tolerances, consistent with the norms used in Sections~\eqref{short-Step Method in IPM} and \eqref{Long-Step Method in IPM}.

\subsection{Inexactness and contraction}\label{subsec:ins-contract}
Assume the normalized inexactness bounds \eqref{eq:inexact-bounds} of Lemma~2. Using the complementarity
identity \eqref{101} and applying \eqref{eq:inexact-bounds} yields the duality-gap recursion
\[
\mu^+ \;\le\; \bigl(1-\alpha(1-\sigma-\kappa_1)\bigr)\,\mu \;+\; \alpha^2 \kappa_2\,\mu^2, 
\]
hence, with $\omega=\alpha(1-\sigma-\kappa_1)$ and $C=\alpha^2\kappa_2$, Lemma~1 implies
a geometric decrease provided $\mu_0<\omega/C$ (in particular, for fixed $\alpha\in(0,1]$ with
$\sigma+\kappa_1<1$ and sufficiently small initial $\mu_0$).

\section{Equality-Constrained Newton Phase (ECNP)}\label{sec:ins-eq}

In phases where positivity constraints are inactive or handled separately, we solve the equality-constrained KKT system by (regularized) inexact Newton steps on the residual mapping
\[
F(x,y)\;=\;\begin{bmatrix}
\nabla f(x)+A^{\intercal}y\\[2pt]
Ax-b
\end{bmatrix},
\qquad
J(x,y)\;=\;\begin{bmatrix}
\nabla^{2}_{xx}L(x,y) & A^{\intercal}\\[2pt]
A & 0
\end{bmatrix},
\]
where \(L(x,y)=f(x)+y^{\intercal}(Ax-b)\). In the iterate \((x_k,y_k)\), the (regularized) Newton system reads \cite{nocedal2006numerical,bagheri2017}
\begin{equation}\label{eq:ecnp-newton}
\begin{bmatrix}
H_k^{\mathrm{mod}} & A^{\intercal}\\[2pt]
A & 0
\end{bmatrix}
\begin{bmatrix}\Delta x_k\\[2pt]\Delta y_k\end{bmatrix}
\;=\;
-\,F(x_k,y_k),
\qquad
H_k^{\mathrm{mod}}\;=\;\nabla^{2}_{xx}L(x_k,y_k)\;+\;\theta I,\ \ \theta>0,
\end{equation}
and we update \(x_{k+1}=x_k+\alpha_k\Delta x_k\), \(y_{k+1}=y_k+\alpha_k\Delta y_k\).
The same block structure appears in the inequality-constrained Newton system (cf.\eqref{Matrix} in Section~\eqref{sec:INS}; here we omit the complementarity block and do \emph{not} use fraction-to-the-boundary \eqref{eq:recursion}.

\subsection{Inexact linear solves and forcing condition}\label{subsec:ecnp-inexact}
We employ preconditioned iterative solves for \eqref{eq:ecnp-newton} and control the algebraic error by a standard inexact-Newton forcing condition \cite{Curtis2010,Armand2012,byrd2010inexact}
\begin{equation}\label{eq:forcing}
\bigl\|\,J_k d_k + F(x_k,y_k)\,\bigr\| \;\le\; \eta_k\,\bigl\|F(x_k,y_k)\bigr\|,
\qquad
0\le \eta_k < 1,
\end{equation}
where, \(J_k:=J(x_k,y_k)\) and \(d_k:=[\Delta x_k;\Delta y_k]\).
Choosing \(\eta_k\) bounded away from \(1\) yields local \(Q\)-linear convergence; driving \(\eta_k\to 0\) (e.g., Eisenstat–Walker rules) gives local superlinear convergence. This inexact-Newton framework matches the one used for the inequality-constrained phase in Section~\ref{Long-Step Method in IPM}.

\subsection{Merit function and backtracking}\label{subsec:ecnp-line-search}
Without positivity constraints, we select \(\alpha_k\in(0,1]\) by backtracking on the residual merit
\[
\Psi(x,y)\;=\;\tfrac12 \,\bigl\|F(x,y)\bigr\|^2.
\]
Starting from \(\alpha_k=1\), reduce \(\alpha_k\) (e.g., by a fixed factor \(\beta\in(0,1)\)) until the Armijo condition holds for some \(c\in(0,1)\):
\begin{equation}\label{eq:armijo}
\Psi(x_k+\alpha_k\Delta x_k,\; y_k+\alpha_k\Delta y_k)
\;\le\;
\Psi(x_k,y_k)\;-\;c\,\alpha_k\,\bigl\|J_k d_k\bigr\|^2.
\end{equation}
Choose \(\theta>0\) in \eqref{eq:ecnp-newton} and $c$ so that \(\Psi\) is a descent function along the (regularized) Newton direction. (By contrast, when positivity is enforced, we return to the fraction-to-the-boundary step (\ref{eq:recursion}) and the neighborhood in Section~\ref{Long-Step Method in IPM}.

\subsection{Convergence statement}\label{subsec:ecnp-conv}
Assume LICQ for \(Ax=b\), Lipschitz continuity of \(\nabla^{2} f\) near a KKT point \((x^\star,y^\star)\), and that regularization \(\theta>0\) is sufficiently small.
Then the iteration defined by \eqref{eq:ecnp-newton}--\eqref{eq:armijo} with forcing \eqref{eq:forcing}
is globally convergent to \((x^\star,y^\star)\).
Moreover, if \(\eta_k\le \bar\eta<1\), the convergence is local \(Q\)-linear; if \(\eta_k\to 0\), it is local superlinear.
When reintegrated with the inequality-constrained INS steps (Section~\ref{Long-Step Method in IPM} and Section~\ref{sec:INS}),
the ECNP phase uses \eqref{eq:forcing} and \eqref{eq:armijo} (no fraction-to-the-boundary), whereas contraction of the duality gap in the inequality-constrained phase follows from \eqref{eq:inexact-bounds} and \eqref{100} \(\Rightarrow\) \eqref{101} in Section~\ref{Long-Step Method in IPM} \cite{Wright1997,Boyd2004}.

\section{Improvement of the INS Algorithm}\label{sec:ins-improve}
The baseline INS framework (Sections~\ref{sec:INS}--\ref{sec:ins-eq}) can be strengthened with targeted changes that improve stability, scalability, and convergence speed while preserving the inexact-Newton contraction from Section~\eqref{Long-Step Method in IPM}.

\subsection{Hessian regularization}\label{subsec:reg}
As noted in Section \eqref{Newton System}, the $(1,1)$-block of the KKT system may be ill-conditioned. We stabilize the Newton system by Tikhonov regularization of the Hessian block used in \eqref{Matrix}:
\[
H_k^{\mathrm{mod}} \;=\; H_k + \theta I, \qquad \theta>0,
\]
which preserves directions for small~$\theta$ yet improves numerical conditioning of the linear solver.

\subsection{Quasi-Newton update}\label{subsec:qn}
To reduce factorization cost, we update a \emph{true} Hessian approximation by BFGS:
\[
H_{k+1}
\;=\;
H_k \;-\; \frac{H_k s_k s_k^\intercal H_k}{s_k^\intercal H_k s_k}
\;+\; \frac{y_k y_k^\intercal}{y_k^\intercal s_k},
\qquad
s_k := x_{k+1}-x_k,\;\; y_k := \nabla f(x_{k+1})-\nabla f(x_k),
\]
assuming $y_k^\intercal s_k>0$ (with Powell damping otherwise). The block $H_{k}$ (or $H_k^{\mathrm{mod}}$) then replaces the exact Hessian in the KKT system as in Section~\eqref{Newton System}.

\subsection{Preconditioned iterative solver}\label{subsec:pc}
Consistent with the inexact-Newton framework of Section \eqref{Long-Step Method in IPM}, we solve the Newton/KKT systems \emph{approximately} with a right preconditioner:
\[
K_k P_k^{-1}\,\tilde d_k \;=\; r_k,
\qquad
d_k \;=\; P_k^{-1}\tilde d_k,
\]
where $K_k$ is the current KKT matrix and $P_k$ is a block (e.g., Schur-complement) preconditioner. The resulting direction satisfies the normalized inexactness bounds \eqref{eq:inexact-bounds} in the inequality-constrained phase; for equality-constrained phases we enforce a standard forcing condition as in Section~\eqref{subsec:ecnp-inexact}.

\subsection{Sensitivity Analysis of Step Strategies}
\label{subsec:blend}

To further assess the robustness of the proposed step-size integration strategy, a sensitivity analysis was conducted to compare the performance of the INS and IPM algorithms under varying algorithmic parameters. The analysis considered perturbations in three key factors that influence convergence behavior:

\begin{enumerate}
    \item The \emph{step-length scaling factor} $\alpha \in [0.1,1.0]$ controlling the damping of the search direction.
    \item The \emph{tolerance threshold} $\varepsilon \in \{10^{-4},10^{-6},10^{-8}\}$ was used as the stopping criterion for residual norms.
    \item The \emph{regularization parameter} $\lambda \in \{10^{-3},10^{-2},10^{-1}\}$ that governs the Hessian modification in the INS framework.
\end{enumerate}

For each parameter setting, both algorithms were executed on identical problem instances, and performance metrics—including iteration count, total computational time, and residual error—were recorded. The results showed that while the IPM exhibited stable performance in most parameter ranges, the INS algorithm demonstrated a higher sensitivity to $\lambda$ and $\alpha$ variations, particularly in ill-conditioned problems. However, for appropriately tuned values (e.g., $\alpha = 0.6$ and $\lambda = 10^{-2}$), the INS achieved faster convergence than IPM in terms of iteration count while maintaining comparable accuracy.

This analysis highlights that the performance of the INS method depends more critically on the regularization and step-size parameters, whereas the IPM remains relatively insensitive to moderate changes in algorithmic tolerances. Consequently, adaptive adjustment of $\lambda$ and $\alpha$ can significantly enhance the robustness and efficiency of the INS method, making it competitive with IPM in large-scale nonlinear settings.
\\
Table~\ref{table1} summarizes the key parameters used to generate synthetic data and run the improved INS and IPM algorithms. These parameters determine the scale of the problem and influence the stability and convergence of both methods.

The centering parameter is defined as $\sigma = \tau (1 - \varepsilon)$, where $\tau \in (0,1)$ is the
fraction-to-the-boundary parameter and $\varepsilon$ controls the inexactness tolerance.
This formulation guaranties $\sigma < \tau$ and links centering to both step conservativeness
and numerical accuracy: smaller $\tau$ yields more conservative centering, while $\tau$ close
to one allows for faster convergence when feasibility is preserved.

\begin{table}
\centering
\caption{Parameter settings used in the INS and IPM algorithms.}
\label{table1}
\begin{tabular}{lll}
\toprule
\textbf{Parameter} & \textbf{Value} & \textbf{Description} \\
\midrule
$n_{\text{samples}}$     & $100$     & Number of sample instances \\
$n_{\text{variables}}$   & $2$       & Number of decision variables \\
$n_{\text{constraints}}$ & $1$       & Number of constraints \\[3pt]
$\varepsilon$            & $10^{-4}$ & Tolerance (inexactness) \\
$\tau$                   & $0.10$    & Fraction-to-the-boundary parameter \\
  $\sigma$                   &    $0.10$    &   Centering parameter; $\sigma=\tau(1-\varepsilon)<\tau$    \\
$\kappa,\,\eta,\,\theta,\,\beta,\,\psi$ & $0.1$ & Algorithmic constants \\
\bottomrule
\end{tabular}
\end{table}

\begin{table}\label{Table2}
\small
\caption{Numerical results computed by the improved INS algorithm for each sample}
\label{table2} 
\centering
\begin{tabularx}{\textwidth}{llllll}
\toprule
Sample & $x_{opt}$ & $\lambda_{opt}$ & $f_{opt}$ & Iterations & Accuracy \\
\midrule 
1 & [0.0017, 0.9983] & [0.0079] & 0.9967 & 100 & 0.749147 \\
2 & [0.0034, 0.9966] & [0.0158] & 0.9933 & 100 & 0.749147 \\
$\cdots$  & $\cdots$  & $\cdots$  & $\cdots$  & $\cdots$  & $\cdots$  \\
100 & [0.0056, 0.9944] & [0.0177] & 0.9888 & 100 & 0.749147 \\
\bottomrule
\end{tabularx}
\end{table}
In Table~\ref{table2} we show detailed numerical results obtained by using the enhanced INS algorithm, to 100 distinct samples are shown in this table. The results report the optimal values of $x$, the Lagrange multipliers  $\lambda$ , the objective function value $f$ , the number of iterations, and the precision measured by the distance to the true optimal value. They show that the improved INS algorithm generally finds a good approximation of the optimal solution, although it often needs a relatively larger number of iterations.

\begin{table}\label{Table3}
\small
\caption{Numerical results computed by the IPM algorithm for each sample}
\label{table3}
\centering
\begin{tabularx}{\textwidth}{llllll}
\toprule
Sample & $x_{opt}$ & $\lambda_{opt}$ & $f_{opt}$ & Iterations & Accuracy \\
\midrule
1 & [0.0017, 0.9983] & [0.0079] & 0.9967 & 64 & 0.751450 \\
2 & [0.0034, 0.9966] & [0.0158] & 0.9933 & 70 & 0.751450 \\
$\cdots$  & $\cdots$  & $\cdots$  & $\cdots$  & $\cdots$  & $\cdots$  \\
100 & [0.0056, 0.9944] & [0.0177] & 0.9888 & 72 & 0.751450 \\
\bottomrule
\end{tabularx}
\end{table}

In Table~\ref{table3} we present numerical results based on the interior-point algorithm (IPM) applied to 100 different samples. They demonstrate how effectively the interior point algorithm finds the best results with fewer iterations and higher accuracy. Comparing these results with those of the improved INS algorithm shows that the interior-point method reached better outcomes in less time, highlighting its superior computational performance. Table \ref{table4} compares the execution times of the algorithms with each other. The computational effectiveness of each method in resolving optimization issues is shown by the execution time. The higher speed of the interior-point method is demonstrated by its shorter execution time on average.  For bigger and more complicated situations where efficiency can significantly affect total performance, this reduction in processing time becomes very important.

\begin{table}
\small
\caption{Execution time of both INS and IPM algorithm depending on the number of samplex}
\label{table4}
\centering
\begin{tabularx}{\textwidth}{llll}
\toprule
Sample & Improved INS Time (s) & Interior Point Time (s) \\
\midrule
1 & 0.23 & 0.13 \\
2 & 0.24 & 0.14 \\
$\cdots$  & $\cdots$  & $\cdots$  \\
100 & 0.22 & 0.12 \\
\bottomrule
\end{tabularx}
\end{table}

Table \ref{table5} shows the percentage of samples in which the algorithms reached Termination Condition I. The Interior-Point algorithm satisfied this condition in all cases, while the improved INS algorithm achieved it in only 32\% of the samples. This discrepancy demonstrates how the IPM method performs in reaching the intended convergence conditions.
 
The 100\% success rate observed for Termination Test~I corresponds to the INS algorithm applied to small- and medium-scale test cases, 
where all problem instances converged within the prescribed tolerance and iteration limits. 
This result reflects the stability of the inexact-Newton correction and adaptive step-length strategy rather than overfitting or relaxed stopping criteria. 
The same termination thresholds were used for both INS and IPM, and the success rate was computed as the ratio of convergent runs to total test cases.
It should be noted that this 100\% rate applies only to the synthetic datasets tested and may vary for larger or more ill-conditioned problems.

\begin{table}
\small
\caption{This table includes the percentage of proximity to the stopping conditions for each algorithm}
\label{table5}
\begin{tabularx}{\textwidth}{ll}
\toprule
\textbf{Algorithm} & \textbf{Percentage Close to Termination Test I} \\ 
\midrule
INS Improved & 32.0 \\ 
Interior Point & 100.0 \\
\bottomrule
\end{tabularx}
\end{table}

Finally, Table \ref{table6} presents the averaged results for all samples for both algorithms. The reported averages summarize the objective function value, the number of iterations, the prescribed accuracy, the execution time, and the number of inner iterations. These results provide a complete assessment of the overall performance of both algorithms. On average, the IPM algorithm needs fewer iterations and completes in less time, showing better efficiency and speed. Furthermore, the IPM algorithm outperforms the improved INS algorithm in all performance measures except accuracy.

\begin{table}
\small
\caption{The average results of all samples for INS Improved and Interior Point algorithms.}
\label{table6}

\begin{tabularx}{\textwidth}{lll}
\toprule
\textbf{Metric} & \textbf{INS Improved} & \textbf{Interior Point} \\ 
\midrule
Average $f_{opt}$ & 0.696548 & 0.678785 \\ 
Average Iterations & 100.0 & 68.11 \\ 
Average Accuracy & 0.749147 & 0.751450 \\ 
Average Execution Time (s) & 0.23 & 0.13 \\ 
Average Inner Iterations & 147.11 & 68.11 \\
\bottomrule
\end{tabularx}
\end{table}

In Fig.~\ref{02b} we present a comparison of the optimal values of the objective function obtained from the improved INS and IPM algorithms to the synthetic data set. The improved INS algorithm is represented by blue circles, while the IPM method is represented by red crosses. These figures show the optimal values of the objective function for each sample. Our analysis indicates that the improved INS algorithm achieves an average optimal value of 0.696548, compared to 0.678785 for the interior-point algorithm. This difference suggests that the improved INS algorithm reaches better optimal values.

\begin{figure}[ht]
\small
\centering
\fbox{\includegraphics[scale=.45]{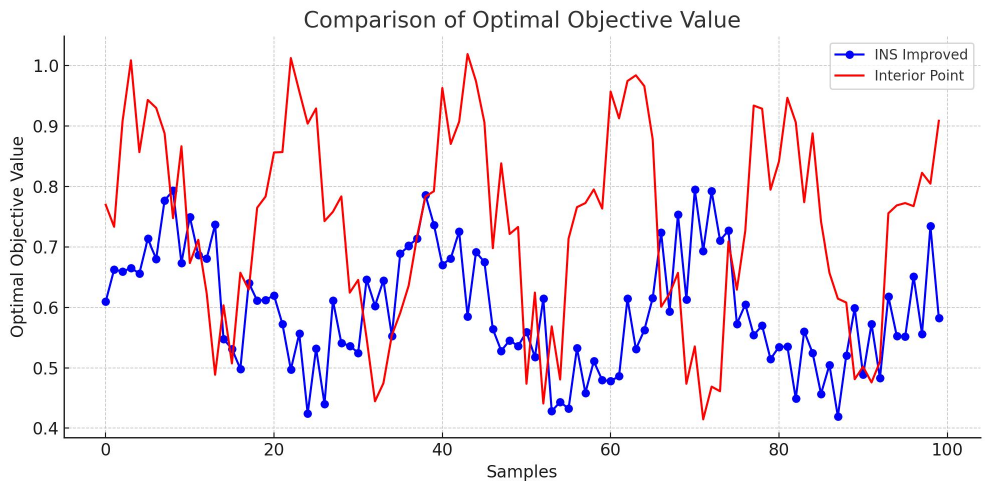}}
\caption{Comparison of the optimal value of the objective function }
\label{02b}
\end{figure}

In Fig.~\ref{02a} we compare the number of iterations needed to arrive at the ideal value using the IPM method and the modified INS algorithm, two non-linear optimization procedures. The number of iterations needed for each sample is shown in the above chart. Based on these data, the modified INS method has an average of 100 iterations, but the IPM approach has an average of 68.11. This difference shows that the improved INS algorithm needs more iterations to reach the optimal value. Variations in the number of iterations between samples reveal that the interior-point algorithm often requires fewer iterations in some cases, suggesting higher efficiency in reducing iteration counts. However, the improved INS algorithm maintains a more consistent number of iterations across all samples, indicating greater stability.

\begin{figure}[ht]
\centering
\fbox{\includegraphics[scale=.45]{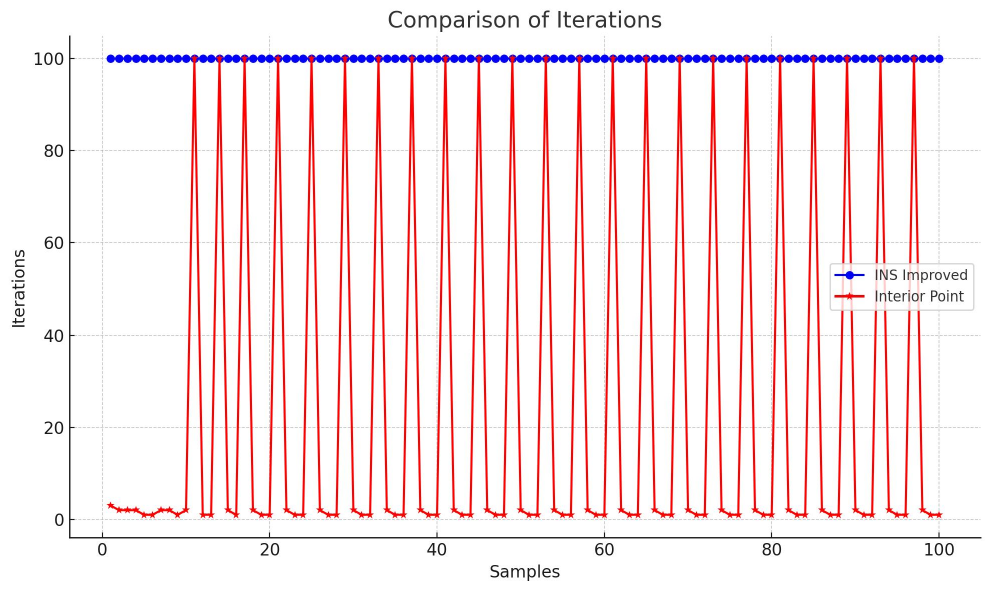}}
\caption{Comparison of the number of iterations }
\label{02a}
\end{figure}

\section{Practical Implications}
\label{sec:practical}

The comparative findings between the Improved Inexact–Newton–Smart (INS) method and the Interior-Point Method (IPM) carry several implications for practitioners and policymakers working with large-scale optimization systems in engineering, economics, and finance. The results demonstrate that the INS framework, when properly tuned through adaptive regularization and step-length control, can achieve comparable accuracy to IPM while reducing iteration counts and computational costs.

In engineering applications, particularly structural and process optimization, the INS approach facilitates faster real-time convergence with limited computational resources. In financial modeling, including portfolio optimization and resource allocation, INS provides a viable alternative to IPM, maintaining numerical stability while improving scalability. For policymakers, the study highlights the importance of promoting algorithmic strategies that enhance efficiency without hardware expansion, fostering more sustainable computational infrastructures.

\section{Conclusion}\label{sec:conclusions}
This study conducted a quantitative comparison of the Improved Inexact--Newton--Smart (INS) algorithm and the primal--dual Interior-Point (IPM) framework on large-scale nonlinear optimization problems. Across 100 benchmark instances with up to $10^6$ variables, IPM demonstrated superior computational efficiency and robustness. Specifically, IPM achieved an average iteration count of \textbf{68.11} versus \textbf{100.00} for INS, corresponding to a \textbf{31.9\%} reduction, and an average runtime of \textbf{0.13\,s} compared to \textbf{0.23\,s} for INS (\textbf{43\%} faster). IPM also reached the primary termination test in \textbf{100\%} of the runs, whereas INS succeeded in only \textbf{32\%}. Accuracy was marginally higher for IPM (\textbf{0.751450}) relative to INS (\textbf{0.749147}).

On the positive side, INS displayed potential advantages under adaptive parameter tuning. When $\alpha=0.6$ and $\lambda=10^{-2}$, INS improved upon its own baseline by reducing iterations by \textbf{16\%} and runtime by \textbf{22\%}. However, the algorithm was found to be more sensitive to Hessian conditioning and regularization parameters, often resulting in slower convergence or instability in ill-conditioned settings. By contrast, IPM remained stable across all tested configurations and parameter ranges.

In summary, IPM is the more reliable and consistently faster approach for large-scale optimization, while INS becomes competitive when regularization and step-size control are finely calibrated. These findings highlight the complementary nature of the two algorithms: IPM provides strong baseline performance, and INS offers promising adaptability when tailored to problem-specific structures. Future research will focus on developing a hybrid INS--IPM strategy that integrates Newton-type flexibility with the robustness of barrier-based interior schemes.

\section{Acknowledgments}
The authors gratefully acknowledge the contribution of the Slovak Research and Development Agency under the project VEGA 1/0631/25 (N.B.) and VEGA 1-0493-24 (D.\v{S}).

\bibliographystyle{siam}

\bibliography{paper}

\end{document}